\title[Finitely generated branch groups]{Commensurated subgroups in finitely generated branch groups}
\author{Phillip Wesolek}
\address{ Universit\'{e} catholique de Louvain,
	Institut de Recherche en Math\'{e}matiques et Physique (IRMP),
	Chemin du Cyclotron 2, box L7.01.02,
	1348 Louvain-la-Neuve, Belgium}
\email{phillip.wesolek@uclouvain.be}
\newtheorem{thm}{Theorem}[section]
\newtheorem{prop}[thm]{Proposition}
\newtheorem{cor}[thm]{Corollary}
\theoremstyle{definition}
\newtheorem{defn}[thm]{Definition}
\newtheorem*{ack}{Acknowledgments}
\newtheorem{ex}[thm]{Example}
\newcommand{\Zb}{\mathbb{Z}}
\newcommand{\Nb}{\mathbb{N}}
\newcommand{\mf}[1]{\mathfrak{#1}}
 \newcommand{\rist}{\mathrm{rist}}
 \newcommand{\Aut}{\mathrm{Aut}}
 \newcommand{\ssl}{/\!\!/}
 \newcommand{\normal}{\trianglelefteq}
 \newcommand{\sym}{\mathrm{Sym}}
 \newcommand{\tdlc}{t.d.l.c.\@\xspace}
 \newcommand{\tdlcsc}{t.d.l.c.s.c.\@\xspace}
\newcommand{\grp}[1]{\langle #1 \rangle}
\newcommand{\ol}[1]{\overline{#1}}
\begin{document}

\begin{abstract}
A subgroup $H\leq G$ is commensurated if $|H:H\cap gHg^{-1}|<\infty$ for all $g\in G$. We show that a finitely generated branch group is just infinite if and only if every commensurated subgroup is either finite or of finite index. As a consequence, every commensurated subgroup of the Grigorchuk group and many other branch groups of independent interest are either finite or of finite index.
\end{abstract}
\maketitle

\section{Introduction}
Subgroups $H$ and $K$ of a group $G$ are \textbf{commensurate} if $|H:H\cap K|<\infty$ and $|K:H\cap K|<\infty$. The subgroup $H$ is \textbf{commensurated} in $G$ if $H$ and $gHg^{-1}$ are commensurate for all $g\in G$. Normal subgroups are obvious examples of commensurated subgroups. However, commensurated subgroups need not be even commensurate with a normal subgroup. Simple groups can admit such commensurated subgroups; for example, Thompson's group $V$ admits an infinite commensurated proper subgroup.

G. Margulis' celebrated normal subgroup theorem demonstrates that any lattice in a higher rank simple algebraic group is just infinite - i.e.\ every non-trivial normal subgroup is of finite index; see \cite{Mar91}. Margulis and R. Zimmer then ask if the commensurated subgroups can be classified up to commensurability by a precise family of known commensurated subgroups; this question is sometimes called the \textit{Margulis--Zimmer commensurated subgroup problem}.  Aside from a strengthening of the normal subgroup theorem, the commensurated subgroup problem seems related to many aspects of arithmetic groups, as discussed in \cite{SW13}. In loc.\ cit.,  Y. Shalom and G. Willis classify the commensurated subgroups for a large family of arithmetic groups, making substantial progress on this problem.

Considering the analogues of the commensurated subgroup problem for other classes of groups with few normal subgroups seems independently interesting. Using the Shalom--Willis strategy of studying certain completions, we here classify the commensurated subgroups of finitely generated just infinite branch groups. Indeed, we characterize the just infinite property for finitely generated branch groups by commensurated subgroups. 

\begin{thm}\label{thm:main}
Suppose $G$ is a finitely generated branch group. Then $G$ is just infinite if and only if every commensurated subgroup is either finite or of finite index.
\end{thm}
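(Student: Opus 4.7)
One direction is essentially formal: every normal subgroup is commensurated, and a branch group has no nontrivial finite normal subgroup (a finite normal subgroup must fix a vertex at each level, hence lies in the kernel of the action on the rooted tree, which is trivial by faithfulness and level-transitivity). Hence if every commensurated subgroup of $G$ is finite or of finite index, then every nontrivial normal subgroup is infinite and therefore of finite index, so $G$ is just infinite.

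For the nontrivial direction, suppose $G$ is a finitely generated just infinite branch group and $H\leq G$ is commensurated. The plan is to follow the Shalom--Willis completion strategy. Form the Schlichting (Belyaev) completion $\hat{G}:=G\ssl H$: a \tdlcsc group together with a continuous homomorphism $\pi\colon G\to \hat{G}$ with dense image and kernel $K=\bigcap_{g\in G}gHg^{-1}$, such that $\overline{\pi(H)}$ is a compact open subgroup of $\hat{G}$ with $\pi^{-1}(\overline{\pi(H)})=H$. If $K\neq 1$, just infiniteness gives $[G:K]<\infty$ and hence $[G:H]<\infty$. Assume therefore $K=1$, and identify $G$ with its dense image in $\hat{G}$, writing $\overline{H}:=\overline{\pi(H)}$ so that $H=G\cap\overline{H}$.

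The goal becomes to show that either $\hat{G}$ is discrete (in which case $G=\hat{G}$ and $H=\overline{H}$ is a finite compact discrete subgroup) or $\hat{G}$ is compact (in which case the open subgroup $\overline{H}$ has finite index, and density then forces $[G:H]<\infty$). To rule out a non-discrete, non-compact $\hat{G}$ I would exploit the rigid-stabilizer calculus of $G$ inside $\hat{G}$: for each level $n$, the rigid level stabilizer $\rist_G(n)=\prod_{v\in L_n}\rist_G(v)$ has finite index in $G$, so its closure is open of finite index in $\hat{G}$, and the factor closures $\overline{\rist_G(v)}$ for $v$ at level $n$ pairwise commute and topologically generate $\overline{\rist_G(n)}$. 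Combined with the compact generation of $\hat{G}$ furnished by the finite generation of $G$, these properties should force $\hat{G}$ to act faithfully on the original rooted tree as a \tdlc branch-type overgroup of $G$. The just-infinite hypothesis on $G$ should then transfer to a topological analogue on $\hat{G}$ that excludes compact open subgroups of infinite index: any such $\overline{H}$ would produce an infinite discrete $G$-invariant set $\hat{G}/\overline{H}$ incompatible with the inherited branch structure.

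The main obstacle is this last step: promoting the discrete just-infinite property of $G$ to a sufficiently strong topological rigidity property of $\hat{G}$. A priori, the completion can harbour closed normal subgroups invisible in $G$, so one must argue --- using both finite generation of $G$ and the explicit commutation structure of the rigid stabilizers at each level --- that in the non-discrete case no compact open subgroup of infinite index can exist. I expect this transfer argument, and the accompanying structural analysis of the Schlichting completion of a finitely generated branch group, to be the technical heart of the proof.
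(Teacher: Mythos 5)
Your easy direction is sound in outline but the justification of the key fact is not: it is unclear why a finite normal subgroup should ``fix a vertex at each level,'' and even fixing one vertex per level would not place it in the kernel of the tree action. The fact you want (a branch group has no nontrivial finite normal subgroup) is true, but it requires an argument: by Proposition~\ref{prop:contain_comm} any nontrivial normal subgroup contains $\rist_G(m)'$ for some level $m$, and if $\rist_G(m)'$ were finite one could find $k\geq m$ with $\rist_G(k)\cap\rist_G(m)'=\{1\}$, so that $\rist_G(k)$ embeds in the abelian group $\rist_G(m)/\rist_G(m)'$ --- impossible for a (weakly) branch group. This is essentially how the paper argues, packaged via Grigorchuk's criterion that $G$ is just infinite iff each $\rist_G(k)'$ has finite index in $\rist_G(k)$.

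The genuine gap is in the forward direction. You correctly form the Schlichting completion $\hat{G}=G\ssl H$, dispose of the nontrivial-kernel case, and reduce to excluding a non-discrete, non-compact completion; but the step you propose for that exclusion --- realizing $\hat{G}$ as a ``branch-type overgroup'' acting on the tree and deriving a contradiction from an ``inherited branch structure'' --- is not carried out, is not obviously true, and is not how the argument closes (your last sentence concedes this is the missing heart). The paper's mechanism is different: every open normal subgroup of $\hat{G}$ has finite index (its preimage in $G$ is nontrivial because $H$ is infinite, hence of finite index by just infiniteness), so by Caprace--Monod the discrete residual $R=\bigcap\{N\normal\hat{G}\mid N \text{ open}\}$ is cocompact; if $H$ has infinite index then $R$ is a nontrivial compactly generated \tdlc group with no discrete or compact quotients and therefore has exactly $n$ topologically simple non-discrete quotients $R/N_1,\dots,R/N_n$ with $0<n<\infty$. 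Choosing a level $m$ with $|V_m|>n$ so that $\overline{\sigma(\rist_G(m)')}$ normalizes each $N_i$, one builds a normal series from the pairwise-commuting closures $\overline{\sigma(\rist_G(v)')}$, $v\in V_m$, and applies the Reid--Wesolek chief-block refinement theorem: since there are more gaps in the series than blocks, some $\overline{\sigma(\rist_G(v)')}$ centralizes every $R/N_i$, and normality plus just infiniteness then force $R$ itself to centralize each $R/N_i$, contradicting their non-abelianness. Note that finite generation enters precisely through compact generation of $R$ (needed for the finiteness of $n$), and the paper's example of a non-finitely-generated just infinite branch group with an infinite commensurated subgroup of infinite index shows that any argument resting only on the branch structure of the completion, as yours does, cannot succeed.
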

As an immediate consequence of this result, we obtain a description of the commensurated subgroups of various groups of independent interest. 
\begin{cor}The Grigorchuk group and the Gupta--Sidki groups are such that every commensurated subgroup is either finite or of finite index.
\end{cor}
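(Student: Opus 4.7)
The plan is simply to verify that each of the listed groups satisfies the three hypotheses of Theorem~\ref{thm:main} --- finitely generated, branch, and just infinite --- and then invoke that theorem directly. The corollary contains essentially no new mathematics beyond that of the main theorem; all of the heavy lifting has already been packaged into Theorem~\ref{thm:main}.

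First I would recall the constructions. The Grigorchuk group $\mc{G}$ is the subgroup of $\Aut(\Trd)$ for $d=2$ generated by the four explicit elements $a,b,c,d$ of Grigorchuk's original construction, where $a$ swaps the two subtrees at the root and $b,c,d$ are defined by a simultaneous recursion. For each odd prime $p$, the Gupta--Sidki $p$-group is the subgroup of the automorphism group of the rooted $p$-ary tree generated by two explicit elements: a rooted automorphism cyclically permuting the $p$ subtrees, and a second element defined recursively. From these presentations both families are manifestly finitely generated.

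Next, I would appeal to the established literature for the remaining two properties rather than reprove them. That $\mc{G}$ and each Gupta--Sidki group are branch --- i.e.\ the rigid stabilizers $\rist_G(n)$ have finite index in the level stabilizers at every level $n$ --- is worked out in the foundational papers of Grigorchuk and of Gupta--Sidki, and is also collected in the Bartholdi--Grigorchuk--\v{S}uni\'{k} survey on branch groups. That $\mc{G}$ is just infinite is a theorem of Grigorchuk from his original paper on the group; the analogous statement for the Gupta--Sidki $p$-groups is due to Sidki. In both cases the arguments proceed by analyzing the normal closures of the given generators together with the self-similar structure, and then showing that every proper quotient is finite.

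With these three properties in hand, Theorem~\ref{thm:main} applies directly to each group and yields the desired dichotomy for commensurated subgroups. There is no substantive obstacle in the argument: every step is either a citation or an immediate consequence of the main theorem, and the only practical task is to ensure that the references for the branch and just-infinite properties are correctly attributed.
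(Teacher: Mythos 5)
Your proposal is correct and matches the paper's intent exactly: the corollary is stated there as an immediate consequence of Theorem~\ref{thm:main}, with the finitely generated, branch, and just infinite properties of the Grigorchuk and Gupta--Sidki groups taken from the standard literature. No further comment is needed.
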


\section{Preliminaries}
We use ``t.d.", ``l.c.", and ``s.c." for ``totally disconnected", ``locally compact", and ``second countable", respectively.  Recall that a \tdlcsc group is a \textbf{Polish group} - i.e. it is separable and admits a complete, compatible metric.

\subsection{Branch groups}
Our approach to branch groups follows closely R. I. Grigorchuk's presentation in \cite{G00}.  

A \textbf{rooted tree} $T$ is a locally finite tree with a distinguished vertex $r$ called the \textbf{root}. Letting $d$ be the usual graph metric, the \textbf{levels} of $T$ are the sets $V_n:=\{v\in T\mid d(v,r)=n\}$. The \textbf{degree} of a vertex $v\in V_n$ is the number of $w\in V_{n+1}$ such that there is an edge from $v$ to $w$. When vertices $k$ and $w$ lie on the same path to the root and $d(k,r)\leq d(w,r)$, we write $k\leq w$. Given a vertex $s\in T$, the \textbf{tree below $s$}, denoted by $T^s$, is the collection of $t$ such that $s\leq t$ along with the induced graph structure.

We call a rooted tree \textbf{spherically homogeneous} if all $v,w\in V_n$ the degree of $v$ is the same as the degree of $w$. A spherically homogeneous tree is completely determined by specifying the degree of the vertices at each level. These data are given by an infinite sequence $\alpha\in \Nb^{\Nb}$ such that $\alpha(i)\geq 2$ for all $i\in \Nb$. The condition $\alpha(i)\geq 2$ is to ensure that levels are not redundant; i.e. if $\alpha(i)=1$, then we can remove the $i$-th level without changing the automorphism group. We denote a spherically homogeneous tree by $T_{\alpha}$ for $\alpha\in \Nb_{\geq 2}^{\Nb}$. When $\alpha\equiv k$ for some $k\geq 2$, we write $T_{k}$.

For $G\leq \Aut(T_{\alpha})$ a subgroup and for a vertex $v\in T_{\alpha}$, the \textbf{rigid stabilizer of $v$} in $G$ is defined to be
\[
\rist_G(v):=\{g\in G\mid g(w)=w \text{ for all }w\in T_{\alpha}\setminus T_{\alpha}^v\}.
\]
The \textbf{$n$-th rigid level stabilizer} in $G$ is defined to be
\[
\rist_G(n):=\grp{\rist_G(v)\mid v\in V_n}.
\]
It is easy to see that $\rist_G(n)\simeq \prod_{v\in V_n}\rist_G(v)$.

\begin{defn}\label{df:profinitebranch}
A group $G$ is said to be a \textbf{branch group} if there is a rooted tree $T_{\alpha}$ for some $\alpha\in \Nb_{\geq 2}^{\Nb}$ such that the following hold:
\begin{enumerate}[(i)]
\item $G$ is isomorphic to a subgroup of $\Aut(T_{\alpha})$. 
\item $G$ acts transitively on each level of $T_{\alpha}$.
\item For each level $n$, the index $|G:\rist_G(n)|$ is finite.
\end{enumerate}
\end{defn}
\noindent Let $\mathrm{(iii)'}$ be the the following condition: Every $\rist_G(n)$ is infinite. A group satisfying $\mathrm{(i)}$, $\mathrm{(ii)}$, and $\mathrm{(iii)'}$ is called a \textbf{weakly branch} group. Plainly, every branch group is also weakly branch.

An infinite group is \textbf{just infinite} if all proper quotients are finite. Just infinite branch groups already have a characterization in terms of certain \textit{normal} subgroups.

\begin{thm}[Grigorchuk, {\cite[Theorem 4]{G00}}]\label{thm:ji}
Suppose $G\leq \Aut(T_{\alpha})$ is a branch group. Then $G$ is just infinite if and only if the commutator subgroup $\rist_G(k)'$ has finite index in $\rist_G(k)$ for all levels $k$.
\end{thm}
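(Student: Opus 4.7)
The plan is to prove the two implications separately; the forward direction is mostly by definition of just-infinite applied to the characteristic subgroup $\rist_G(k)'$, while the backward direction requires the structural work of extracting commutators from an arbitrary non-trivial normal subgroup.

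For the backward direction, I assume $\rist_G(k)'$ has finite index in $\rist_G(k)$ for every $k$, and take a non-trivial $N \trianglelefteq G$. I would choose $1 \neq g \in N$; since $g \in \Aut(T_\alpha)$ acts non-trivially on the tree, there must be a level $k$ and $v \in V_k$ with $w := g(v) \neq v$. The pivotal observation is that $T_\alpha^v$ and $T_\alpha^w$ are disjoint subtrees, so $\rist_G(v)$ and $\rist_G(w)$ commute pointwise in $G$. For each $h \in \rist_G(v)$ the element $[g, h] = (ghg^{-1}) h^{-1}$ lies in $N$ and splits as a commuting product of $ghg^{-1} \in \rist_G(w)$ with $h^{-1} \in \rist_G(v)$. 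Conjugating by any $h' \in \rist_G(v)$ cancels the $\rist_G(w)$ component, yielding
\[
[h', [g, h]] = [h', h^{-1}] \in N.
\]
As $h, h'$ range over $\rist_G(v)$ these are exactly the commutators generating $\rist_G(v)'$, so $\rist_G(v)' \leq N$. Level-$k$ transitivity of $G$ together with $g\rist_G(u)g^{-1} = \rist_G(g(u))$ then propagates this to $\rist_G(u)' \leq N$ for every $u \in V_k$, giving $\rist_G(k)' \leq N$. The hypothesis combined with branch axiom (iii) forces $|G : N| < \infty$.

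For the forward direction, I assume $G$ is just infinite. Level-transitivity of $G$ on $V_k$ makes $\rist_G(k)$ normal in $G$, so $\rist_G(k)'$ is characteristic in $\rist_G(k)$ and thus normal in $G$; provided $\rist_G(k)' \neq \{1\}$, just-infiniteness gives $|G : \rist_G(k)'| < \infty$ immediately. The hard part will be the edge case $\rist_G(k)' = \{1\}$, where $\rist_G(k)$ is abelian: here one must manufacture a non-trivial $G$-invariant subgroup of $\rist_G(k)$ of infinite index in order to contradict just-infiniteness. My approach is to exploit the decomposition $\rist_G(k) = \prod_{v \in V_k} \rist_G(v)$ with $G$ permuting factors transitively: passing to the rational representation $\rist_G(k) \otimes_{\Zb} \Qb$ of the finite group $G/\rist_G(k)$ and applying Maschke's theorem, the trivial diagonal summand splits off whenever $|V_k| \geq 2$, and the preimage in $\rist_G(k)$ of any proper non-trivial $G$-subrepresentation gives the required infinite-index invariant subgroup (torsion and infinite-rank cases are handled by first splitting off the torsion part and then working rank-by-rank). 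Making this step work uniformly is the most delicate part of the proof.
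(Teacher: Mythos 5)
The paper does not actually prove this statement; it is quoted from Grigorchuk \cite{G00}, so I am judging your proposal on its own merits. Your backward direction is correct and is the standard argument (it is essentially the content of Proposition~\ref{prop:contain_comm}): the identity $[h',[g,h]]=[h',h^{-1}]$, exploiting that $T_\alpha^v$ and $T_\alpha^{g(v)}$ are disjoint so the corresponding rigid stabilizers commute, does yield $\rist_G(v)'\leq N$, and level transitivity plus axiom (iii) and the hypothesis finish it. The forward direction is likewise fine in the main case, since $\rist_G(k)'$ is normal in $G$.

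The genuine gap is the edge case $\rist_G(k)'=\{1\}$, and your Maschke-over-$\Qb$ strategy cannot close it. First, $\rist_G(k)$ may be an infinite \emph{torsion} abelian group --- and for the groups this theorem is applied to in this paper (Grigorchuk, Gupta--Sidki) every subgroup is torsion --- so $\rist_G(k)\otimes_{\Zb}\Qb=0$: ``splitting off the torsion part'' leaves nothing, and Maschke fails over $\mathbb{F}_p$ when $p$ divides the order of the acting finite group. Second, even in the torsion-free case $\rist_G(k)\otimes\Qb$ is an \emph{induced} module, not a permutation module: conjugation acts inside each factor $\rist_G(v)\otimes\Qb$ as well as permuting the factors, so there is no canonical diagonal, the trivial summand occurs only if $\rist_G(v)\otimes\Qb$ has fixed vectors for the vertex stabilizer (Frobenius reciprocity), and an induced module can be irreducible, leaving no proper non-trivial subrepresentation to take a preimage of. The repair is much easier than this machinery. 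Since $\rist_G(k)$ is abelian and of finite index by axiom (iii), it lies in $C:=C_G(\rist_G(k))$, so $G/C$ is finite and every $a\in\rist_G(k)$ has finite conjugacy class; the normal closure of any $a\neq 1$ is then a non-trivial finitely generated abelian normal subgroup, so just infiniteness makes $G$ virtually $\Zb^d$ with $d\geq 1$. But then for each level $m$ the $|V_m|$ pairwise commuting, pairwise trivially intersecting factors $\rist_G(v)$ are all non-trivial and cannot all be finite (else $\rist_G(m)$ would be finite), so $\rist_G(m)$ contains $\Zb^{|V_m|}$ with $|V_m|\rightarrow\infty$, a contradiction. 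Alternatively, prove directly that a weakly branch group has no abelian rigid stabilizer (a fact the paper invokes in Proposition~\ref{prop:reverse}): if $\rist_G(k)$ were abelian and $h\in\rist_G(v)$ moved a vertex $u>v$, then $\rist_G(u)=h\rist_G(u)h^{-1}=\rist_G(h(u))$, forcing $\rist_G(u)=\{1\}$ and hence $\rist_G(m)=\{1\}$ at the level of $u$.
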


We shall need a fact implicit in the proof of \cite[Theorem 4]{G00}.
\begin{prop}[Grigorchuk]\label{prop:contain_comm}
Suppose $G\leq \Aut(T_{\alpha})$ acts transitively on each level of $T_{\alpha}$. If $H\normal G$ is non-trivial, then there is a level $m$ such that $\rist_G(m)'\leq H$.
\end{prop}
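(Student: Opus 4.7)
The plan is to take any non-trivial $h \in H$, locate a level $m$ at which $h$ permutes the vertices non-trivially, and then extract $\rist_G(v)'$ for a single $v \in V_m$ by a short iterated-commutator computation; level-transitivity of $G$ together with normality of $H$ will then promote this inclusion to $\rist_G(m)' \leq H$.

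First, I would pick $h \in H$ with $h \neq 1$. Since $h$ is a non-trivial tree automorphism it must move some vertex, so I can choose a level $m$ and a vertex $v \in V_m$ with $w := h(v) \neq v$. The crucial geometric input is that $v$ and $w$ are distinct vertices of the same level, whence $T_\alpha^v$ and $T_\alpha^w$ are disjoint subtrees. Consequently, for any $a \in \rist_G(v)$, the conjugate $hah^{-1}$ lies in $\rist_G(w)$ and therefore commutes with every element of $\rist_G(v)$. In particular $[h,a] = (hah^{-1})\,a^{-1}$, and this element acts as $hah^{-1}$ on $T_\alpha^w$, as $a^{-1}$ on $T_\alpha^v$, and trivially elsewhere. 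Next, for any $b \in \rist_G(v)$, I would compute $[[h,a],b]$: since $b$ is supported on $T_\alpha^v$ and the ``$T_\alpha^w$-piece'' of $[h,a]$ lives on the disjoint subtree $T_\alpha^w$, that piece commutes with $b$ and contributes nothing, and a brief support check gives $[h,a]\,b\,[h,a]^{-1} = a^{-1}ba$ on $T_\alpha^v$ and trivial elsewhere. Thus $[[h,a],b] = [a^{-1},b]$ as elements of $\rist_G(v)$. By normality of $H$ we have $[h,a] \in H$, and hence $[a^{-1},b] \in H$. Letting $a$ and $b$ vary over $\rist_G(v)$, this yields $\rist_G(v)' \leq H$.

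Finally, level-transitivity of $G$ on $V_m$ provides, for each $v' \in V_m$, an element $g \in G$ with $g\rist_G(v)g^{-1} = \rist_G(v')$, and then $\rist_G(v')' = g\rist_G(v)'g^{-1} \leq gHg^{-1} = H$. Since $\rist_G(m) \simeq \prod_{v' \in V_m}\rist_G(v')$, the factors commute and we conclude $\rist_G(m)' \leq H$. The main piece of care is the identity $[[h,a],b] = [a^{-1},b]$: one must bookkeep supports to confirm that the $T_\alpha^w$-component of $[h,a]$ drops out (because $b$ is trivial off $T_\alpha^v$) while the $T_\alpha^v$-component acts precisely as $a^{-1}$. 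Beyond that, the argument is algebraically routine, leaning only on the disjointness of subtrees below distinct same-level vertices, normality of $H$, and level-transitivity of $G$.
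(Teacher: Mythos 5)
Your proof is correct, and it is essentially the standard argument attributed to Grigorchuk that the paper cites without reproducing: pick $h\in H$ moving a vertex $v$ at some level $m$, use disjointness of $T_\alpha^v$ and $T_\alpha^{h(v)}$ to compute $[[h,a],b]=[a^{-1},b]\in H$ for $a,b\in\rist_G(v)$, and then spread $\rist_G(v)'\leq H$ to all of $\rist_G(m)'$ by level-transitivity. The support bookkeeping and the final step (that the commuting factors $\rist_G(v')$ give $\rist_G(m)'=\prod_{v'}\rist_G(v')'$) are both handled correctly.
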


\subsection{Completions and chief blocks}
Our proof requires the Schlichting completion. This completion has appeared in various contexts in the literature. See for example \cite{T03} or consider \cite{RW_Hom_15} for a longer discussion. We here give a brief account

Given a countable group $G$ with a commensurated subgroup $O$, the group $G$ acts by left multiplication on the collection of left cosets $G/O$. This induces a permutation representation $\sigma:G\rightarrow \sym(G/O)$ with kernel the normal core of $O$ in $G$. The group $\sym(G/O)$ is a topological, indeed Polish, group under the pointwise convergence topology, and we may thus form a completion as follows:
 
\begin{defn}
For a countable group $G$ with a commensurated subgroup $O$, the \textbf{Schlichting completion} of $G$ with respect to $O$, denoted by $G\ssl O$, is defined to be $\ol{\sigma(G)}$. The map $\sigma:G\rightarrow G\ssl O$ is called the \textbf{completion map}.
\end{defn}
It is easy to verify that $G\ssl O$ is a \tdlcsc group. When $G$ is finitely generated,  $G\ssl O$ is additionally compactly generated.

We shall also need the theory of chief blocks developed in \cite{RW_P_15}. A \textbf{normal factor} of a topological group $G$ is a quotient $K/L$ such that $K$ and $L$ are closed normal subgroups of $G$ with $L < K$. We say that $K/L$ is a \textbf{chief factor} if there are no closed normal subgroups of $G$ strictly between $L$ and $K$. The centralizer of a normal factor $K/L$ is 
\[
C_G(K/L):=\{g\in G\mid [g,K]\leq L\}.
\]

Centralizers give a notion of equivalence for chief factors; we restrict this equivalence to non-abelian chief factors for technical reasons. Non-abelian chief factors $K_1/L_1$ and $K_2/L_2$ are \textbf{associated} if $C_G(K_1/L_1)=C_G(K_2/L_2)$. For a non-abelian chief factor $K/L$, the equivalence class of non-abelian chief factors equivalent to $K/L$ is denoted by $[K/L]$. The class $[K/L]$ is called a \textbf{chief block} of $G$. The set of chief blocks of $G$ is denoted  by $\mf{B}_G$. For a chief block $\mf{a}$, the centralizer $C_G(\mf{a})$ is defined to be $C_G(K/L)$ for some (equivalently, any) representative $K/L$.

A key property of chief blocks is a general refinement theorem.

\begin{thm}[Reid--Wesolek, {\cite[Theorem 1.15]{RW_P_15}}]\label{thmintro:Schreier_refinement}
Let $G$ be a Polish group, $\mf{a}\in \mf{B}_G$, and
\[
\{1\} = G_0 \le G_1 \le \dots \le G_n = G
\]
be a series of closed normal subgroups in $G$.  Then there is exactly one $i \in \{0,\dots,n-1\}$ such that there exist closed normal subgroups $G_i \le B < A \le G_{i+1}$ of $G$ for which $A/B\in \mf{a}$.  
\end{thm}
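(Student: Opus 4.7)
The plan is to characterise the correct index $i$ intrinsically using the centraliser $C := C_G(\mf{a})$ of the block, then to produce the required chief factor by pulling back a representative $K/L \in \mf{a}$ into the strip between $G_i$ and $G_{i+1}$. Fix such a $K/L$, so $C = C_G(K/L)$. The whole argument hinges on one observation: for any chief factor $A/B$ of $G$ with $[A/B] = \mf{a}$, one has $B \le C$ and $A \not\le C$. Indeed $B \normal G$ gives $[b,a] = b^{-1}(a^{-1}ba) \in B$ for all $b \in B$, $a \in A$, so $B \le C_G(A/B) = C$; and $A \le C$ would force $[A,A] \le B$, contradicting non-abelianness of $A/B$. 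This observation yields uniqueness at once: the conclusion forces $G_i \le C$ and $G_{i+1} \not\le C$, and along the monotone chain $G_0 \le \dots \le G_n$ there is at most one such transition.

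For existence I would first use the same observation to choose a candidate: $G_0 = \{1\} \le C$ while $G_n = G \not\le C$ (because $K \not\le C$), so a unique $i$ with $G_i \le C$ and $G_{i+1} \not\le C$ exists. A short commutator calculation shows that $G_{i+1} \not\le C$ forces $K \cap G_{i+1} \not\le L$: if $K \cap G_{i+1} \le L$ then, using normality of both $K$ and $G_{i+1}$, the commutator $[g,k]$ of any $g \in G_{i+1}$ and $k \in K$ lies in $K \cap G_{i+1} \le L$, giving $G_{i+1} \le C$. Consequently the closed $G$-invariant subgroup $\ol{L(K\cap G_{i+1})}/L$ of the chief factor $K/L$ is non-trivial and must equal $K/L$. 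I would then introduce the closed $G$-normal subgroups $\widetilde{A} := \ol{G_i(K \cap G_{i+1})}$ and $\widetilde{B} := \ol{G_i(L \cap G_{i+1})}$, which satisfy $G_i \le \widetilde{B} \le \widetilde{A} \le G_{i+1}$, check that $\widetilde{A} > \widetilde{B}$ and that $C_G(\widetilde{A}/\widetilde{B}) = C$, and finally refine the pair $(\widetilde{B}, \widetilde{A})$ to an honest chief factor $A/B$ of $G$ with $[A/B] = \mf{a}$.

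The main obstacle is this last refinement step. In a general Polish group, an ascending union of closed normal subgroups need not be closed, so Zorn's lemma does not directly manufacture chief factors between $\widetilde{B}$ and $\widetilde{A}$, and one cannot naively invoke a set-theoretic Schreier argument. The correct route is the covering/non-covering dichotomy for chief blocks, namely that a closed normal subgroup $N$ of $G$ contains a representative of $\mf{a}$ precisely when $N \not\le C_G(\mf{a})$; this lets one extract a chief factor in $\mf{a}$ from any non-centralising closed normal subgroup, and in particular from $\widetilde{A}$ modulo $\widetilde{B}$. Carefully setting up that dichotomy in the Polish setting, and verifying that the extracted chief factor actually lies in $\mf{a}$ rather than some merely associated block, is where the real technical work of the theorem sits.
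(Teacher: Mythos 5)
This statement is imported verbatim from Reid--Wesolek \cite[Theorem 1.15]{RW_P_15}; the present paper gives no proof of it, so there is no in-paper argument to compare against, and your proposal has to be judged against the source. The uniqueness half of your argument is correct and complete: for a non-abelian chief factor $A/B$ one indeed has $B\leq C_G(A/B)$ (normality of $B$) and $A\not\leq C_G(A/B)$ (non-abelianness), so any witness between $G_i$ and $G_{i+1}$ forces $G_i\leq C$ and $G_{i+1}\not\leq C$, and monotonicity of the chain gives at most one such $i$. Your reduction for existence is also sound as far as it goes: the candidate $i$ exists, $K\cap G_{i+1}\not\leq L$, hence $\ol{L(K\cap G_{i+1})}=K$, and the factor $\widetilde{A}/\widetilde{B}$ you build satisfies $\widetilde{B}\leq C$ and $\widetilde{A}\not\leq C$.

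The genuine gap is the final step, which you yourself flag: you invoke a ``covering/non-covering dichotomy'' asserting that a closed normal factor $\widetilde{A}/\widetilde{B}$ with $\widetilde{B}\leq C_G(\mf{a})$ and $\widetilde{A}\not\leq C_G(\mf{a})$ must contain a representative of $\mf{a}$. Applied to the series $\{1\}\leq\widetilde{B}\leq\widetilde{A}\leq G$, that dichotomy \emph{is} the existence half of the theorem, so appealing to it is circular; it is precisely the hard content of \cite{RW_P_15}. Nor is there an easy repair: Zorn's lemma fails for the reason you note (ascending unions of closed normal subgroups need not be closed), and the dual attempt --- taking $A$ minimal among closed normal subgroups of $\widetilde{A}$ containing $\widetilde{A}\cap C_G(\mf{a})$ and not contained in $C_G(\mf{a})$ --- fails because an intersection of such subgroups can drop into the centraliser. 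The actual proof requires the structural machinery of that paper (the behaviour of association and centralisers under quotients and normal compressions), none of which is reconstructed here. A minor conceptual point besides: a chief factor ``associated'' to a representative of $\mf{a}$ is by definition a member of $\mf{a}$, so your worry about landing in ``some merely associated block'' is vacuous; the real issue is only whether a chief factor between $\widetilde{B}$ and $\widetilde{A}$ exists at all and why its centraliser equals $C$.
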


\section{Commensurated subgroups}
We first establish the reverse implication of our main theorem. For this implication, we need not assume the group is finitely generated, and the result holds for weakly branch groups.
\begin{prop}\label{prop:reverse}
Let $G\leq \Aut(T_{\alpha})$ be a weakly branch group. If every commensurated subgroup of $G$ is either finite or of finite index, then $G$ is just infinite.
\end{prop}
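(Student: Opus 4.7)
The plan is to reduce the proposition to the statement that a weakly branch group contains no non-trivial finite normal subgroup. Given a non-trivial $N \trianglelefteq G$, it is in particular commensurated, so by hypothesis $N$ is either finite or of finite index. Ruling out the finite case will force $N$ to have finite index, making every proper quotient of $G$ finite; since $G = \rist_G(0)$ is infinite by the weak branch hypothesis, $G$ is then just infinite.

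So suppose toward a contradiction that $N \trianglelefteq G$ is finite and non-trivial, and pick $n \in N \setminus \{1\}$. Since $n$ is a non-trivial tree automorphism fixing the root, it moves some vertex $w \in V_k$ for some $k \geq 1$, and $n(w) \in V_k \setminus \{w\}$. The key observation I would prove is
\[
C_G(n) \cap \rist_G(w) = \{1\}.
\]
The argument is a short rigid-stabilizer calculation: if $h \in \rist_G(w)$ centralizes $n$, then $h = n^{-1}hn \in n^{-1}\rist_G(w)n = \rist_G(n^{-1}(w))$. But $w$ and $n^{-1}(w)$ are distinct vertices of the same level $V_k$, so the subtrees $T_\alpha^w$ and $T_\alpha^{n^{-1}(w)}$ are disjoint, and an element fixing everything outside each of them fixes all of $T_\alpha$; hence $h = 1$.

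With this in hand, I would consider the map $\phi : \rist_G(w) \to N$ defined by $\phi(h) := [h,n] = hnh^{-1}n^{-1}$, which takes values in $N$ by normality. The centralizer identity makes $\phi$ injective: $\phi(h) = \phi(h')$ rearranges to $(h')^{-1}h \in C_G(n) \cap \rist_G(w) = \{1\}$. On the other hand, $\rist_G(w)$ is infinite, since the weak branch hypothesis forces $\rist_G(k) \cong \prod_{v \in V_k} \rist_G(v)$ to be infinite, with the finitely many factors pairwise conjugate under the level-transitive action of $G$ on $V_k$. An infinite group cannot inject into the finite set $N$, a contradiction.

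The heart of the argument is the centralizer identity; once that is in place the remainder is a counting argument. There is no serious obstacle, and in particular no finite-generation hypothesis is required, in agreement with the statement of the proposition.
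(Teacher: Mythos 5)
Your proof is correct, but it takes a genuinely different route from the paper. The paper tests the hypothesis on the specific commensurated subgroups $\rist_G(m)'$: assuming such a subgroup is finite, it intersects some deeper $\rist_G(k)$ trivially (using $\bigcap_{n\geq m}\rist_G(n)=\{1\}$), forcing $\rist_G(k)$ to be abelian, which weakly branch groups forbid; hence each $\rist_G(m)'$ has finite index, so $G$ is in fact branch and Grigorchuk's characterization (Theorem~\ref{thm:ji}) yields just infiniteness. You instead test the hypothesis on an arbitrary non-trivial normal subgroup $N$ and prove directly that a weakly branch group has no non-trivial finite normal subgroup, via the identity $C_G(n)\cap\rist_G(w)=\{1\}$ for $w$ moved by $n$ and the injectivity of $h\mapsto [h,n]$ from the infinite group $\rist_G(w)$ into $N$. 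All steps check out: the conjugation identity $n^{-1}\rist_G(w)n=\rist_G(n^{-1}(w))$, the disjointness of the subtrees, the normality of $N$ making the commutator land in $N$, and the infinitude of $\rist_G(w)$ from level-transitivity. Your argument is more self-contained --- it bypasses Grigorchuk's theorem and the auxiliary facts about trivial intersection of level stabilizers and non-abelian rigid stabilizers, essentially inlining the standard commutator trick that underlies those facts --- while the paper's version is shorter on the page at the cost of citing more machinery. Both correctly use only the weakly branch hypothesis and require no finite generation.
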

\begin{proof}
Fix a level $m$. The commutator subgroup $H:=\rist_G(m)'$ is then a normal subgroup of $G$ and, a fortiori, commensurated. Suppose for contradiction that $H$ is finite. Since $\bigcap_{n\geq m}\rist_G(n)=\{1\}$, there is some $k\geq m$ such that $\rist_G(k)\cap H=\{1\}$. The group $\rist_G(k)$ then injects into $\rist_G(m)/H$, so it is abelian. This is absurd since weakly branch groups do not admit abelian rigid stabilizers. We thus deduce that $H$ is of finite index in $G$, and it follows that $G$ is a branch group. Appealing to Theorem~\ref{thm:ji}, $G$ is just infinite. 
\end{proof}

We now consider the converse for finitely generated branch groups.
\begin{thm}\label{thm:forward}
Suppose $G\leq \Aut(T_{\alpha})$ is a finitely generated branch group. If $G$ is just infinite, then every commensurated subgroup of $G$ is either finite or of finite index.
\end{thm}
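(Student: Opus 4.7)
The plan is to pass to the Schlichting completion of $G$ along $O$ and exploit Grigorchuk's characterisation of just infinite branch groups (Theorem~\ref{thm:ji} and Proposition~\ref{prop:contain_comm}) in order to force the completion to be discrete, which will yield that $O$ is finite.

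Let $\sigma\colon G\to H:=G\ssl O$ denote the completion map and let $U:=\overline{\sigma(O)}$. The kernel $\ker\sigma$ is the normal core of $O$ in $G$, hence a normal subgroup of $G$, so the just-infinite hypothesis implies either $\ker\sigma=\{1\}$ or $|G:\ker\sigma|<\infty$. In the second case $O$ contains $\ker\sigma$ and is itself of finite index, proving the theorem. I therefore reduce to the case that $\sigma$ is injective and identify $G$ with its dense image in the compactly generated \tdlcsc group $H$, which satisfies $G\cap U=O$. Since a discrete dense subgroup of a \tdlc group is automatically closed and so coincides with the ambient group, the goal of showing $O$ finite becomes that of showing $H$ is discrete.

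The central structural claim is that every closed normal subgroup $M$ of $H$ is either trivial or open of finite index. Indeed, $M\cap G$ is normal in $G$ and so, by just-infiniteness, is trivial or of finite index. In the finite-index case, Proposition~\ref{prop:contain_comm} supplies $k$ with $\rist_G(k)'\leq M\cap G$, and Theorem~\ref{thm:ji} ensures $\rist_G(k)'$ has finite index in $G$; the closure $\overline{\rist_G(k)'}\leq M$ is then open of finite index in $H$, so $M$ is as well.

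The principal obstacle is excluding the remaining possibility: $M$ non-trivial with $M\cap G=\{1\}$. Here I would apply the chief block refinement Theorem~\ref{thmintro:Schreier_refinement} to an essentially chief series of $H$ passing through $M$ and through some open $\overline{\rist_G(k)'}$, isolating a chief factor $K/L$ of $H$ inside $M$; since the centraliser $C_G(K/L)$ is normal in $G$ and hence trivial or of finite index, a careful combination with Proposition~\ref{prop:contain_comm} should produce a contradiction, either by transferring the statement that some $\rist_G(k)'$ centralises $K/L$ into information about $M\cap G$, or by ruling out a faithful action of $G$ on a chief factor disjoint from it. Once this is accomplished, the structural claim holds, and combined with the defining faithfulness of the $H$-action on $H/U$ it forces $H$ to be discrete, completing the argument. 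The hardest step, as anticipated, is precisely this chief-block-theoretic exclusion of a non-trivial closed normal subgroup intersecting $G$ trivially.
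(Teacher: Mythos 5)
Your setup (the Schlichting completion, the reduction to $\sigma$ injective, and the observation that a closed normal subgroup $M\normal H$ with $M\cap G$ of finite index must be open of finite index via Proposition~\ref{prop:contain_comm} and Theorem~\ref{thm:ji}) matches the opening of the paper's argument. But the proposal has two genuine gaps, and they sit exactly where the content of the theorem lies. First, the case you flag as the ``principal obstacle'' --- a non-trivial closed normal subgroup meeting $G$ trivially --- is not resolved by your sketch. Theorem~\ref{thmintro:Schreier_refinement} does not by itself produce a chief factor of $H$ inside a given normal subgroup; it locates a given chief block relative to a given normal series, so you first need a finite supply of blocks to locate. The paper gets this from Caprace--Monod (\cite[Theorems A and F]{CM11}): the intersection $R$ of all open normal subgroups of $H$ is cocompact (if trivial, $H$ is compact and $O$ has finite index), and being compactly generated with no non-trivial discrete or compact quotients, $R$ has a finite, non-zero number $n$ of non-discrete topologically simple quotients $R/N_1,\dots,R/N_n$. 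The contradiction then comes from a counting argument absent from your plan: choosing a level $m$ with $k>n$ vertices, the pairwise commuting subgroups $L_v=\overline{\sigma(\rist_G(v)')}$ give a normal series of length $k$ in $E=\overline{\sigma(\rist_G(m)')}$; refining it to display all $n$ blocks $[R/N_i]$ must leave some step $K_j<K_{j+1}$ untouched, whence $L_{v_{j+1}}$ centralizes every $R/N_i$; since $\bigcap_i C_H(R/N_i)$ is normal in $H$ and meets $\sigma(G)$ non-trivially, it is open of finite index, forcing each $R/N_i$ to be abelian, which is absurd. Without the Caprace--Monod input and the product structure of $\rist_G(m)$, ``a careful combination with Proposition~\ref{prop:contain_comm}'' does not close the argument.

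Second, even granting your structural claim, your endgame fails: knowing that every closed normal subgroup of $H$ is trivial or open of finite index does not force $H$ to be discrete. A non-discrete compactly generated topologically simple \tdlc group satisfies the structural claim and acts faithfully on $H/U$ for any compact open $U$ (whose core is automatically trivial in a Schlichting completion), so faithfulness of the action on $H/U$ rules nothing out. This is why the paper never argues for discreteness of $H$: it instead derives a contradiction with the standing assumption that $O$ is both infinite and of infinite index, using ``$R$ trivial $\Rightarrow H$ compact $\Rightarrow O$ of finite index'' on one side and the simple-quotient analysis above on the other. You would need to replace your final step with an argument of that kind.
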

\begin{proof}
Suppose for contradiction $O\leq G$ is an infinite commensurated subgroup of infinite index. Form the Schlichting completion $H:=G\ssl O$ and let $\sigma:G\rightarrow H$ be the completion map.

Since $G$ is finitely generated, $H$ is a compactly generated \tdlcsc group. For all open normal subgroups $L\normal H$, the preimage $\sigma^{-1}(L)$ is a non-trivial normal subgroup of $G$, hence it has finite index. It follows that $L$ is a finite index open subgroup of $H$. Every open normal subgroup of $H$ therefore has finite index. Appealing to \cite[Theorem F]{CM11}, we deduce that 
\[
R:=\bigcap\{O\normal H\mid O\text{ is open}\}
\]
is a cocompact characteristic subgroup of $H$ without non-trivial discrete quotients. If $R$ is trivial, then $H$ is a compact group, and $O$ has finite index in $G$. However, this is absurd, as we assume $O$ has infinite index.

The group $R$ is thus an infinite compactly generated \tdlcsc group with no non-trivial discrete quotients. Since $R$ is \tdlc, any non-trivial compact quotient is profinite, and thus, such a quotient produces a non-trivial discrete quotient. We deduce that $R$ additionally has no non-trivial compact quotient. The result \cite[Theorem A]{CM11} now implies that $R$ admits exactly $n$ non-discrete topologically simple quotients where $0<n<\infty$; say that $N_1,\dots,N_n$ lists the kernels of these quotients. The group $H$ acts on $\{N_1,\dots,N_n\}$ by conjugation, so there is a closed $\tilde{H}\normal H$ with finite index such that $\tilde{H}$ fixes each $N_i$. The pre-image $\sigma^{-1}(\tilde{H})$ is then a finite index normal subgroup of $G$. Via Proposition~\ref{prop:contain_comm}, there is some level $m$ of the tree such that $\rist_G(m)'\leq \sigma^{-1}(\tilde{H})$, and we may assume $m>n$. Taking $E:=\ol{\sigma(\rist_G(m)')}$, we have that $E$ is a finite index subgroup of $H$ and that $E$ normalizes each $N_i$. Each factor $R/N_i$ is thus a chief factor of $E$; let $\mf{a}_i$ be the chief block of $E$ given by $R/N_i$. 

For each $v\in V_m$, the subgroup $L_v:=\ol{\sigma(\rist_G(v)')}$ is a non-abelian closed normal subgroup of $E$. Letting $v_1,\dots,v_k$ list $V_m$, put $K_i:=\ol{L_{v_1}\dots L_{v_i}}$ and observe that $K_i<K_{i+1}$. We thus obtain a normal series for $E$:
\[
\{1\}<K_1<\dots<K_k=E.
\]
Repeatedly applying Theorem~\ref{thmintro:Schreier_refinement}, we may refine the series to include a representative for each $\mf{a}_i$. Since $n<k$, there is some $K_j<K_{j+1}$ such that the refinement puts no subgroups between $K_j$ and $K_{j+1}$. The subgroup $L_{v_{j+1}}$ is plainly contained in the centralizer of any $\mf{a}_l$ which has a representative which appears in the refined series after $K_{j+1}$. On the other hand, since $L_{v_{j+1}}$ centralizes $K_{j}$, the group $L_{v_{j+1}}$ also centralizes the $\mf{a}_l$ with representatives appearing in the refined series before $K_{j}$. The group $L_{v_{j+1}}$ thus centralizes each block  $\mf{a}_1,\dots,\mf{a}_n$, and therefore, it centralizes each factor $R/N_i$.

Returning to the setting of $H$, the subgroup $K:=\bigcap_{i=1}^nC_H(R/N_i)$ is normal in $H$, and moreover, the previous paragraph ensures $L_{v_{j+1}}\leq K$. Therefore, $K$ intersects $\sigma(G)$ non-trivially, so $\sigma^{-1}(K)$ is a finite index subgroup of $G$. We conclude that $K$ has finite index in $H$, so $R\leq K$. Each $R/N_i$ is thus abelian, which is absurd.
\end{proof}

\begin{proof}[Proof of Theorem~\ref{thm:main}] Proposition~\ref{prop:reverse} gives the reverse implication. Theorem~\ref{thm:forward} gives the forward implication.
\end{proof}

The following example shows that the finite generation hypothesis is necessary in Theorem~\ref{thm:forward}:

\begin{ex} Let $A_5$ be the alternating group on five elements and take the usual permutation representation $(A_5,[5])$. For each $n\geq 1$, let $K_n$ be the iterated wreath product of $n$ copies of $(A_5,[5])$. The permutation group given by the imprimitive action $(K_n,[5]^n)$ induces an embedding $\phi_n:K_n\rightarrow \Aut(T_5)$, where the action of $K_n$ on $T_5$ moves the vertices below the $n$-th level rigidly.
	
The $K_n$ form a directed system, so we may take the direct limit $G$. One verifies that the maps $\phi_n$ cohere to induce a map $\phi:G\rightarrow \Aut(T_5)$. The map $\phi$ moreover witnesses that $G$ is a branch group. Applying Theorem~\ref{thm:ji}, it follows that $G$ is also just infinite. 
	
However, $G$ admits infinite commensurated subgroups of infinite index. For example, let $F$ be a proper non-trivial subgroup of $A_5$ and let $(F,[5])$ be the permutation representation induced by $(A_5,[5])$. The iterated wreath products of copies of $(F,[5])$ again form a direct system. Moreover, the direct limit is an infinite commensurated subgroup of infinite index in $G$.	
\end{ex}

We conclude with an easy observation. The results of Shalom--Willis \cite{SW13} show that various arithmetic groups, including $SL_n(\Zb)$ for $n\geq 3$, have the following strong property, which is sufficient to ensure every commensurated subgroup is either finite or of finite index for a just infinite group. We say that $K\leq G$ \textbf{commensurates} $H\leq G$ if $|H:H\cap kHk^{-1}|<\infty$ for all $k\in K$.
\begin{defn}
A group $G$ is said to have the \textbf{outer commensurator-normalizer property} if the following holds: for every group $H$ and every homomorphism $\psi:G\rightarrow H$, if there is $D\leq H$ commensurated by $\psi(G)$, then there is $\widetilde{D}\leq H$ commensurate with $D$ and normalized by $\psi(G)$. 
\end{defn}

Just infinite finitely generated branch groups can fail the outer commensurator-normalizer property. To see this, we recall a standard group-theoretic construction:

\begin{defn} Suppose $(G_i)_{i\in \Nb}$ is a sequence of \tdlc groups and suppose there is a distinguished compact open subgroup $O_i\leq G_i$ for each $i\in \Nb$. The \textbf{local direct product} of $(G_i)_{i\in \Nb}$ over $(O_i)_{i\in \Nb}$ is defined to be 
	\[
	\left\{ f:\Nb\rightarrow \bigsqcup_{i\in \Nb} G_i\mid f(i)\in G_i \text{, and }f(i)\in O_i \text{  for all but finitely many }i\in \Nb\right\}
	\]
	with the group topology such that $\prod_{i\in \Nb}O_i$ continuously embeds as an open subgroup. We denote the local direct product by $\bigoplus_{i\in \Nb}\left(G_i,O_i\right)$.
\end{defn}
Local direct products of \tdlc groups are again \tdlc groups.

\begin{prop} 
The Grigorchuk group fails the outer commensurator-normalizer property.
\end{prop}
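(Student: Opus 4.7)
The plan is to construct a homomorphism $\psi:\Gamma\to H$ from the Grigorchuk group $\Gamma$ to a \tdlc group $H$ built as a local direct product, and to exhibit a compact open subgroup $D\leq H$ that is commensurated by $\psi(\Gamma)$ but is not commensurate with any $\psi(\Gamma)$-normalized subgroup of $H$. Let $\widehat{\Gamma}$ denote the closure of $\Gamma$ in $\Aut(T_2)$, a profinite group in which $\Gamma$ is dense, and let $U\leq \widehat{\Gamma}$ be a compact open subgroup that is \emph{not} normal in $\widehat{\Gamma}$ (for instance, the stabilizer of a vertex at level $2$, which has several distinct conjugates under the level-transitive action). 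Form
\[
H:=\bigoplus_{n\in \Nb}\bigl(\widehat{\Gamma}_n,\,U_n\bigr),
\]
where each $(\widehat{\Gamma}_n,U_n)$ is a copy of $(\widehat{\Gamma},U)$; this is \tdlc by the preceding discussion.

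The map $\psi$ will be a ``diagonal-type'' embedding engineered to satisfy two properties: (i) for each $g\in\Gamma$, the $n$-th coordinate $\gamma_n(g)$ of $\psi(g)$ lies in $U_n$ for all but finitely many $n$, so that $\psi(g)\in H$; and (ii) the projection $\gamma_n(\Gamma)$ of $\psi(\Gamma)$ to each factor $\widehat{\Gamma}_n$ is dense. The naive diagonal $g\mapsto(g,g,\dots)$ fails (i) whenever $g\notin U$, so one must exploit the self-similar branch structure of $\Gamma$: for sufficiently large $n$, replace $g$ by a restriction supported on a deep subtree lying inside $U_n$, chosen compatibly so that $\psi$ remains a homomorphism while keeping infinitely many ``diagonal'' coordinates that witness density.

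Take $D:=\prod_n U_n\leq H$, the canonical compact open subgroup. A direct computation shows $D$ is commensurated by $\psi(\Gamma)$: for each $g\in\Gamma$, the conjugate $\psi(g)D\psi(g)^{-1}$ differs from $D$ only at the finitely many coordinates where $\gamma_n(g)\notin U_n$, and each such coordinate contributes a finite-index correction $[U_n:U_n\cap \gamma_n(g)U_n\gamma_n(g)^{-1}]$.

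The crucial step is to argue that no $\widetilde{D}\leq H$ commensurate with $D$ is normalized by $\psi(\Gamma)$. Such a $\widetilde{D}$ would necessarily be compact open, and at each coordinate where (ii) holds its projection $\pi_n(\widetilde{D})\leq \widehat{\Gamma}$ would be a closed subgroup normalized by $\gamma_n(\Gamma)$, hence, by density, normal in $\widehat{\Gamma}$. Commensurability of $\widetilde{D}$ with $D$ then forces $\pi_n(\widetilde{D})=U_n$ for almost all $n$, contradicting the non-normality of $U$ in $\widehat{\Gamma}$. The main obstacle is assembling a single homomorphism $\psi$ satisfying both (i) and (ii) simultaneously; overcoming this is where the branch structure of $\Gamma$ is essential, since it provides the flexibility to make the ``tails'' of $\psi(g)$ lie inside the compact open $U_n$ without sacrificing the density of each coordinate projection.
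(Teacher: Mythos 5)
Your plan hinges on constructing a homomorphism $\psi:\Gamma\to\bigoplus_{n}(\widehat{\Gamma}_n,U_n)$ whose coordinate projections $\gamma_n=\pi_n\circ\psi$ all (or at least infinitely many of which) have dense image in $\widehat{\Gamma}$. You correctly flag this as ``the main obstacle,'' but it is not merely an obstacle: no such $\psi$ exists, precisely because the Grigorchuk group is finitely generated. Indeed, let $g_1,\dots,g_k$ generate $\Gamma$. Membership of each $\psi(g_i)$ in the local direct product means $\gamma_n(g_i)\in U_n$ for all but finitely many $n$, so there is an $N$ with $\gamma_n(g_i)\in U_n$ for all $n\geq N$ and all $i$. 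Then $\gamma_n(\Gamma)=\langle\gamma_n(g_1),\dots,\gamma_n(g_k)\rangle\leq U_n$ for every $n\geq N$, and since $U_n$ is a proper closed subgroup of $\widehat{\Gamma}_n$, the image $\gamma_n(\Gamma)$ is not dense for any $n\geq N$. Hence at most finitely many coordinates can ``witness density,'' and your final step --- deducing that $\pi_n(\widetilde{D})$ is normal in $\widehat{\Gamma}$ because it is normalized by a dense subgroup --- has nothing to apply to. The self-similar structure of $\Gamma$ cannot rescue this; the obstruction is purely about finite generation and holds for any finitely generated source group.

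The paper's proof takes a genuinely different route that sidesteps this issue: rather than making $\Gamma$ hit each factor densely, it lets $\Gamma$ \emph{permute} the factors. The essential external input is Cornulier's observation that the Grigorchuk group acts on a countable set $X$ with a commensurated subset $Y$ that is not transfixed (there is no $Y'$ with $|Y\Delta Y'|<\infty$ that is setwise $G$-invariant). One then forms the local direct product of copies of a fixed finite group $F$ indexed by $X$, with distinguished subgroups $F$ at coordinates in $Y$ and $\{1\}$ elsewhere, and takes the semidirect product with $G$ acting by shift. The compact open subgroup $\prod_{x\in Y}F_x$ is commensurated, and any normalized commensurate subgroup would force $Y$ to be transfixed, a contradiction. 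If you want to salvage your idea, you need an input of this coordinate-permuting kind rather than dense diagonal projections.
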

\begin{proof}
The Grigorchuk group $G$ admits an action on a countable set $X$ with a non-transfixed commensurated subset $Y\subseteq X$. That is to say, there is no $Y'\subseteq X$ such that $G$ fixes $Y'$ setwise and $|Y\Delta Y'|<\infty$; see \cite[Section 2.7]{C13}.

Fix a non-trivial finite group $F$, let $F_x$ list copies of $F$ indexed by $X$, and for each $x\in X$, define 
\[
U_x:=\begin{cases} F &\mbox{if } x\in Y \\ 
\{1\} & \mbox{else} . 
\end{cases}
\] 
We then form the local direct product $\bigoplus_{x\in X}(F_x,U_x)$. The group $G$ obviously acts on $\bigoplus_{x\in X}(F_x,U_x)$ by shift, so we take
\[
H:=\bigoplus_{x\in X}(F_x,U_x)\rtimes G.
\]
Since $Y$ is a commensurated subset of $X$ and $F$ is finite, it follows that $H$ is a \tdlc group.

The subgroup $U:=\prod_{x\in Y}F_x$ is a compact open subgroup of $H$, and thus, $G$ commensurates it. Suppose for contradiction that $V\leq H$ is commensurate with $U$ and normalized by $G$.  Passing to the closure if necessary, we may take $V$ to be closed, hence $V$ is also a compact open subgroup. Let $Y'$ be the collection of coordinates $x$ such that the projection $\pi_x(V)$ is non-trivial. It follows that $|Y'\Delta Y|<\infty$. However, since $G$ normalizes $V$, the set $Y'$ must be stabilized by $G$, contradicting our choice of $Y$.
\end{proof}

\begin{ack}
	The author thanks Colin Reid for his helpful remarks and the anonymous referee for his or her detailed suggestions. The author was supported by ERC grant \#278469.
\end{ack}


\bibliographystyle{amsplain}
\bibliography{biblio1}

\providecommand{\bysame}{\leavevmode\hbox to3em{\hrulefill}\thinspace}
\providecommand{\MR}{\relax\ifhmode\unskip\space\fi MR }
\providecommand{\MRhref}[2]{%
  \href{http://www.ams.org/mathscinet-getitem?mr=#1}{#2}
}
\providecommand{\href}[2]{#2}
\begin{thebibliography}{1}

\bibitem{CM11}
Pierre-Emmanuel Caprace and Nicolas Monod, \emph{Decomposing locally compact
  groups into simple pieces}, Math. Proc. Cambridge Philos. Soc. \textbf{150}
  (2011), no.~1, 97--128. \MR{2739075 (2012d:22005)}

\bibitem{C13}
Yves Cornulier, \emph{Group actions with commensurated subsets, wallings and
  cubings}, arXiv:1302.5982 [math.GR].

\bibitem{G00}
R.~I. Grigorchuk, \emph{Just infinite branch groups}, New horizons in pro-{$p$}
  groups, Progr. Math., vol. 184, Birkh\"auser Boston, Boston, MA, 2000,
  pp.~121--179. \MR{1765119 (2002f:20044)}

\bibitem{Mar91}
G.~A. Margulis, \emph{Discrete subgroups of semisimple {L}ie groups},
  Ergebnisse der Mathematik und ihrer Grenzgebiete (3) [Results in Mathematics
  and Related Areas (3)], vol.~17, Springer-Verlag, Berlin, 1991. \MR{1090825
  (92h:22021)}

\bibitem{RW_P_15}
Colin~D. Reid and Phillip~R. Wesolek, \emph{Chief factors in {P}olish groups},
  arXiv:1509.00719 [math.GR].

\bibitem{RW_Hom_15}
\bysame, \emph{Homomorphisms into totally disconnected, locally compact groups
  with dense image}, arXiv:1509.00156 [math.GR].

\bibitem{SW13}
Yehuda Shalom and George~A. Willis, \emph{Commensurated subgroups of arithmetic
  groups, totally disconnected groups and adelic rigidity}, Geom. Funct. Anal.
  \textbf{23} (2013), no.~5, 1631--1683. \MR{3102914}

\bibitem{T03}
Kroum Tzanev, \emph{Hecke ${C}^*$-algebras and amenability}, J. Operator Theory
  \textbf{50} (2003), 169--178.

\end{thebibliography}

\end{document}